\newcommand{\N}{\mathbb{N}}
\newcommand{\II}{\mathcal{I}}
\newcommand{\G}{\mathcal{G}}
\newcommand{\wt}{\widetilde}
\newcommand{\arr}{\rightarrow}
\newcommand{\fii}{\varphi}
\newcommand{\vv}{\vert\vert}
\newcommand{\vvh}{\vert\vert_{Hof}}
\newcommand{\Homeo}{\operatorname{Homeo}}
\newcommand{\Diff}{\operatorname{Diff}}
\newcommand{\Poiss}{\operatorname{Poiss}(M, \pb)}
\newcommand{\Ham}{\operatorname{Ham}(M, \pb)}
\newcommand{\ccon}{\overset{C^0}{\longrightarrow}}
\newcommand{\hcon}{\overset{Hof}{\longrightarrow}}
\newtheorem{theorem}{Theorem}
\newtheorem{proposition}[theorem]{Proposition}
\newtheorem{corollary}[theorem]{Corollary}
\newtheorem{remark}{Remark}
\newtheorem*{theorem*}{Theorem}
\newtheorem*{remark*}{Remark}
\newtheorem*{definition*}{Definition}
\newtheorem{question}{Question}
\newtheorem*{question*}{Question}
\theoremstyle{remark}
\newcommand{\X}{\mathfrak{X}}
\newcommand{\pb}{\{ \cdot, \cdot \}}
\title{$C^0$-rigidity of Poisson diffeomorphisms}
\author{Du\v{s}an Joksimovi\'c}
\begin{document}

\maketitle

\vspace{-6ex}
\begin{abstract}
We prove the Poisson version of the Eliashberg-Gromov $C^0$-rigidity. More precisely, we prove that the group of Poisson diffeomorphisms is closed with respect to the $C^0$ topology inside the group of all diffeomorphisms. The proof relies on the Poisson version of the energy-capacity inequality.
\end{abstract}

\section{Introduction and main result} \label{sec:intro}

The famous $C^0$-rigidity theorem of Y.~Eliashberg and M.~Gromov states that the group of symplectic diffeomorphisms forms a closed subset of the group of all diffeomorphisms equipped with the $C^0$ topology (i.e.~the compact-open topology). This result led to definitions of symplectic homeomorphisms and topological symplectic manifolds, and is considered as the beginning of the subfield called $C^0$-symplectic geometry. Roughly, it investigates non-smooth symplectic objects and the behaviour of smooth symplectic objects with respect to the $C^0$ topology. The aim of this article is to extend this philosophy to Poisson manifolds. As the main result, we will prove the Poisson analog of the Eliashberg-Gromov theorem.

A Poisson structure on a smooth manifold $M$ is a Lie bracket $\pb$ on the space $C^{\infty}(M)$ which satisfies the Leibniz identity
\begin{equation} \label{eq:leibniz}
    \{fg,h\} = f\{g,h\}+ g\{f,h\}, \quad \forall f,g,h \in C^{\infty}(M).
\end{equation}
Note that every manifold carries the trivial Poisson structure $\pb \equiv 0.$ Moreover, examples of Poisson manifolds include all symplectic manifolds and duals of Lie algebras.
\begin{definition*} \label{def:poisson-diff}
\emph{A Poisson diffeomorphism} is a smooth diffeomorphism $\psi: (M, \pb) \arr (M,\pb)$ that satisfies
\begin{equation} \label{eq:poisson}
    \{f,g\} \circ \psi = \{f \circ \psi, g \circ \psi\}, 
\end{equation}
for all $f,g \in C^{\infty}(M).$
We denote the group of all Poisson diffeomorphisms by $\Poiss.$
\end{definition*}

\begin{remark} \label{rmk:comp_supp} \normalfont
Notice that being a Poisson map is a local condition. Therefore to check that a diffeomorphism $\psi: (M, \pb) \arr (M,\pb)$ is Poisson it is enough to verify condition \eqref{eq:poisson} for all compactly supported functions $f$ and $g.$ We will exploit this fact in the proof of Theorem \ref{main-thm}.
\end{remark}

Let $(M,\pb)$ be a Poisson manifold and let $f \in C_c^{\infty}([0,1] \times M)$ be a compactly supported time-dependent Hamiltonian function. We define the (time-dependent) Hamiltonian vector field $X_f^t$ associated to $f$ by
$$X_f^t := \{f_t, \cdot\} \in \X (M),$$
where $f_t := f(t, \cdot) \in C^{\infty}(M),$ $t\in [0,1].$ \footnote{Note that the Leibniz identity \eqref{eq:leibniz} implies that $\{f_t, \cdot\}$ is a derivation of $C^{\infty}(M)$ and as such it defines a (time-dependent) vector field on $M.$}
The flow $\{\varphi_f^t\}$  of $X_f^t$ is called the Hamiltonian flow (or the Hamiltonian isotopy) generated by $f.$ 
The Hamiltonian group of $(M,\pb)$ is
\[\mathrm{Ham}(M,\pb):=\big\{\varphi^1_f \hspace{1mm} \vert \hspace{1mm} f\in C^{\infty}_{c}([0,1]\times M)\big\}.\]
The orbits of the standard action of $\Ham$ on $M$ induce a foliation of $M$ which is called the \emph{symplectic foliation} and its leaves are called \emph{symplectic leaves}.

Although the condition of being a Poisson map includes the derivative of the map it turns out that Poisson diffeomorphisms behave nicely with respect to the $C^0$ limits.
The main result of this article is the following.

\begin{theorem}[$C^0$ rigidity of Poisson diffeomorphisms] \label{main-thm}
Let $(M,\pb)$ be a Poisson manifold. Then $\Poiss$ is a closed subset of $\Diff(M)$ with respect to the $C^0$ topology.
\end{theorem}

Theorem \ref{main-thm} generalizes the Eliashberg-Gromov theorem to general Poisson manifolds, in the sense that any symplectic manifold $(M,\omega)$ carries the natural Poisson structure 
$\{\cdot,\cdot\} := \pb_{\omega}$ given by 
$$\{f_1, f_2\} := \omega (X_{f_1}, X_{f_2}), \quad \forall f_1,f_2 \in C^{\infty} (M),$$
where $X_{f_i}$ are the corresponding Hamiltonian vector fields (i.e.~unique vector fields  that satisfy $df_i = \omega(X_{f_i}, \cdot),$ $i =1,2$).
Poisson structures generalize symplectic structures by relaxing the non-degeneracy condition (which is algebraic), but still keeping the closedness condition\footnote{Note that for symplectic manifolds, the closedness of the symplectic form is equivalent with the Jacobi identity for the corresponding Poisson bracket.} (which is geometric). Thus Theorem \ref{main-thm} shows that it is the closedness condition which is crucial for the $C^0$-rigidity rather than non-degeneracy.   \\

It is known that in the symplectic setting the Poisson bracket exhibits $C^0$-rigid behaviour. 
The first result in this direction is due to F.~Cardin and C.~Viterbo \cite{Cardin-Viterbo} who proved that for all sequences $f_k, g_k \in C_c^{\infty}(M)$ which converge in the $C^0$ topology to smooth functions $f$ and $g,$ the following holds: if    
$\{f_k,g_k\} \ccon 0$ then $\{f,g\} =0.$ 
This result was improved later by M.~Entov, L.~Polterovich, and  F.~Zapolsky \cite{EP-Poisson,Zapolsky-poisson} where they proved that the Poisson bracket of a pair of functions is lower semicontinuous with respect to the $C^0$ topology, while L.~Buhovsky \cite{Buhovsky2/3} further improved this result by giving a sharp estimate on the rate of convergence.
Sufficient conditions for $\{f_k,g_k\}$ to converge to $\{f,g\}$ (in the $C^0$ topology) provided that $(f_k,g_k) \ccon (f,g),$ were given by V.~Humili\`ere \cite{Humiliere-poisson} and M.-C.~Arnaud \cite{Arnaud}.

The proof of Theorem \ref{main-thm} exploits a general principle that many rigidity phenomena in symplectic geometry arise from the energy-capacity inequality (see e.g.~\cite{Lalonde-McDuff}). 
More precisely, in the proof of Theorem \ref{main-thm} we use the following consequence of the energy-capacity inequality: 
if a sequence of Hamiltonian diffeomorphisms converges in both $C^0$ and Hofer topology then the limits coincide (see Proposition \ref{prop:hofer-c0-limit} below). For the definition of the Hofer norm for Poisson structures we refer the reader to Section \ref{sec:proof} or \cite{Joksimovic-Marcut}.

\subsection{Poisson homeomorphisms and some open questions}

The Eliashberg-Gromov theorem led to the definition of symplectic homeomorphisms as the $C^0$ limits of symplectic diffeomorphisms.
The behavior of symplectic homeomorphisms drew a lot of attention lately, as they play an important role
in symplectic geometry and dynamics (see e.g.~\cite{Sobhan-simplicity,Humiliere-Habilitaion,BO16,Oh-Muller} and references therein). 
Following the same principle we introduce the following.

 \begin{definition*}[Poisson homeomorphisms]   Let $(M,\pb)$ be a Poisson manifold.
    A map $\varphi \in \Homeo (M)$ is a \emph{Poisson homeomorphism} if there exists a sequence $\varphi_k \in \Poiss,$ $k \in \N$ such that $\varphi_k \ccon \varphi.$
\end{definition*}

From Theorem \ref{main-thm} it follows that a Poisson homeomorphism which is also a diffeomorphism is a Poisson diffeomorphism. 

The following general question naturally arises.

\begin{question*}
How much Poisson geometry do Poisson homeomorphisms remember?
\end{question*}

There are many interesting instances of the above question. Here we will state just some of them.

\begin{question}[Poisson homeomorphisms and symplectic leaves] \label{ques:symplectic-leaves}
    Do Poisson homeomorphisms map symplectic leaves to symplectic leaves homeomorphically? If so, does the restriction of a Poisson homeomorphism to a symplectic leaf induce a symplectic homeomorphism between the leaf and its image?
\end{question}

Another interesting problem is to understand how various classes of submanifolds behave with respect to Poisson homeomorphisms. It is known that symplectic homeomorphisms express very interesting behavior in this sense. For example, coisotropic submanifolds are preserved by symplectic homeomorphisms (see \cite{HLS} for a precise statement), while there is an example of a 
 symplectic homeomorphism which maps a symplectic submanifold to an isotropic submanifold (see \cite{BO16}).

Let $(M, \pb)$ be a Poisson manifold and $N \subseteq M.$ 
The \emph{vanishing ideal of $N$} is 
$$\II (N) := \{ f \in C^{\infty} (M) \hspace{1mm} \vert \hspace{1mm} f (x) = 0, \text{ } \forall x \in N \}.$$
A submanifold $N \subseteq (M, \pb)$ is \emph{coisotropic} if the vanishing ideal $\II (N)$ is a Lie subalgebra, i.e.~for every $f, g \in \II (N)$ it holds that $\{f,g\} \in \II (N).$ 
Following \cite{HLS} one could ask the following.

\begin{question}[Poisson coisotropic $C^0$-rigidity] \label{ques:coiso-rigidity}
Let $(M,\pb)$ be a Poisson manifold, $\varphi$ be a Poisson homeomorphism, and $N \subseteq M$ be a coisotropic submanifold such that $\varphi(N)$ is a smooth submanifold of $M.$ Is $\varphi(N)$ coisotropic? 
\end{question}

From an abstract point of view, Poisson manifolds can be seen as certain ``quotients'' of symplectic manifolds. In this sense it is natural to ask when Poisson homeomorphisms induce symplectic homeomorphisms of the associated symplectic manifold. 

To make this more precise let $(M, \pb)$ be a Poisson manifold. 
A \emph{symplectic realization} of $(M,\pb)$ is a pair $((S,\omega), \mu)$
where $(S, \omega)$ is a smooth symplectic manifold and $\mu: (S,\omega) \arr (M,\pb)$ is a surjective submersion which is a Poisson map.
In practice, constructed symplectic realizations $(S,\omega)$ have additional structure of a (symplectic) Lie groupoid. 

A Lie groupoid is a tuple $(\G, M, t,s,m,u,i)$ where $\G$ (the space of ``arrows'') and $M$ (the space of ``objects'') are smooth manifolds, $s,t: \G \arr M$ (the ``source'' and ``target'' maps) are smooth submersions, and $u: M \arr \G, i: \G \arr G, m: \G^{(2)} \arr G$ are smooth maps, where $\G^{(2)}$ denotes the space of so-called ``composable arrows''. 
For more details on (symplectic) Lie groupoids and symplectic realizations we refer to the book \cite{CMF_book}.

\begin{definition*}[Integrable Poisson manifolds]
 A Poisson manifold $(M,\pb)$ is called \emph{integrable} if there exists a symplectic groupoid $(\Sigma, \Omega)$ such that $((\Sigma, \Omega), t)$ is a symplectic realization of $(M,\pb)$ where $t$ is the target map and such that the canonical embedding (using the unit map) $u: M \arr \Sigma$ is a Lagrangian embedding.   
\end{definition*}

It is known that if $(M,\pb)$ is integrable then there is a unique integration $(\Sigma,\Omega)$ with simply-connected source-fibers and this one we call the \emph{canonical integration}. For more details on the problem of integrability of Poisson manifolds we refer to \cite{Crainic,Xu,CMF_book} and references therein.

Integration of Poisson manifolds can be seen as a generalization of the fact that for every smooth manifold there is a canonical way of associating a symplectic manifold by taking its cotangent bundle. It turns out that the cotangent bundles represent integrations of the trivial Poisson structures $\pb \equiv 0,$
where the target map $t$ is the canonical projection and the unit embedding $u$ is the canonical embedding $M \arr T^*M$ as the zero-section. For more examples we refer to \cite[Chapters 12-14]{CMF_book}.

Assume that $(M,\pb)$ is an integrable Poisson manifold and denote by $((\Sigma, \Omega),t)$ the corresponding (canonical) integration. One could ask whether a Poisson homeomorphism of $(M,\pb)$ induce a symplectic homeomorphisms on the integration $(\Sigma, \Omega).$

\begin{question} \label{ques:groupoid-homeo}
    Let $\varphi$ be a Poisson homeomorphism $(M, \pb).$ Does there exist a symplectic homeomorphism $\wt \varphi$ of the symplectic groupoid $(\Sigma, \Omega)$ such that $t \circ \varphi = \wt\varphi \circ t$?
\end{question}

This question is already interesting for the trivial Poisson structure since it is not clear whether a homeomorphism of the base induce a symplectic homeomorphism of the cotangent bundle. Note that a diffeomorphism of the base always induces a (smooth) symplectomorphism of the cotangent bundle in the above sense. Namely, for every $\varphi \in \Diff (M)$ the map given by 
$$T^*M \ni (x,p) \mapsto \left(\varphi(x), p (d \varphi^{-1} (\cdot)) \right) \in T^*M$$
is a symplectomorphism of $T^*M$ with respect to the canonical symplectic structure.

\begin{remark}[$C^0$ rigidity of Lagrangian bisections] \label{rmk:lagr-bisections} \normalfont
    Let $(\Sigma, \Omega)$ be a symplectic groupoid integrating a closed\footnote{compact and without boundary} Poisson manifold $(M,\pb).$ 
    A \emph{Lagrangian bisection} is a section\footnote{Here by section we mean with respect to the target map, i.e.~$t\circ b = \operatorname{id}_M: M \arr M$} $b: M \arr \Sigma$ such that $b^*\Omega = 0$ and such that $s \circ b: M \arr M$ is a diffeomorphism where $s$ is the source map.
    It is not hard to check that for a given Lagrangian bisection $b$ the induced diffeomorphism $s \circ b$ is a Poisson diffeomorphism. Such maps define a subgroup $\Gamma(\Sigma,\Omega)$ of $\Poiss.$ 
   
    By the Laudenbach-Sikorav's theorem \cite{Laudenbach-Sikorav} about the $C^0$-rigidity of Lagrangian embeddings it follows that the space of Lagrangian bisections is closed with respect to the $C^0$ topology, and hence the induced group $\Gamma(\Sigma,\Omega)$ is closed in the $C^0$ topology inside $\Diff (M)$.
    Note that in general $\Gamma(\Sigma,\Omega) \subsetneq \Poiss$ and hence Theorem \ref{main-thm} gives a stronger result.

    On the other hand, $C^0$ rigidity of Lagrangian bisections could carry some additional information (not necessarily related to the group $\Poiss$).
    For example, consider the case where $(M,\pb \equiv 0).$ Then $(\Sigma,\Omega) = (T^*M, \omega_{can})$ and the Lagrangian bisections are exactly closed 1-forms. Hence from the $C^0$-rigidity of Lagrangian bisections it follows that the space of closed differential 1-forms is closed inside the space of all differential 1-forms equipped with the $C^0$ topology. Note that in this case $\Gamma(\Sigma,\Omega) = \{\operatorname{id}_M \},$ and therefore the $C^0$ closedness of $\Gamma(\Sigma,\Omega)$ trivially follows. 

\end{remark}

\subsection*{Acknowledgement}
 I would like to thank Ioan M\u{a}rcu\cb{t} for carefully reading and providing a very useful feedback on a preliminary version of the article and for suggesting Questions \ref{ques:symplectic-leaves} and \ref{ques:groupoid-homeo}, and to
  Marius Crainic for an interesting discussion and suggesting Remark \ref{rmk:lagr-bisections}. 
The idea for the proof of Theorem \ref{main-thm} arose while I was preparing a talk for the UGC seminar at Utrecht University and therefore I would like to thank Fabian Ziltener and \'Alvaro del Pino G\'omez for inviting me to give a talk.

The work on this project was funded by Agence Nationale de la Recherche through ``ANR COSY: New challenges in contact and symplectic topology'' grant (decision ANR-21-CE40-0002).

\section{Proof of Theorem \ref{main-thm}} \label{sec:proof}

First, we recall the definition of the Hofer metric on $\Ham.$

Let $(M, \pb)$ be a Poisson manifold and $f \in C_c^{\infty}([0,1] \times M).$
The length of the Hamiltonian isotopy generated by $f$ is defined as
\[l(f):=\int_0^1\big(\sup_{x \in M} f_t(x)-\inf_{x \in M} f_t(x) \big) \ dt.\]
Notice that, contrary to the symplectic case, the 
length of the Hamiltonian isotopy depends on the choice of a Hamiltonian function that generates the isotopy since Casimir functions need not be constant in general.
We define a norm on $\mathrm{Ham}(M,\pb)$ by
\begin{equation} \label{eq:hofer-norm}
    \vv \fii \vvh:=\inf\big\{l(f)\ \vert\ f\in C^{\infty}_{c}([0,1]\times M),\ \varphi_f^1=\fii \big\},
\end{equation}
which we call the Hofer norm.

The Hofer norm was first introduced on symplectic manifolds.
It is not hard to check that \eqref{eq:hofer-norm} defines a conjugation invariant pseudo-norm on $\Ham,$ while
the proof of the non-degeneracy relies on hard methods from symplectic topology.

In the symplectic case, H.~Hofer \cite{Hofer} proved non-degeneracy for the standard symplectic structure on $\mathbb{R}^{2n}.$ Later L.~Polterovich \cite{Polterovich-hofer} extended it to a larger class of symplectic manifolds, and F.~Lalonde and D.~McDuff \cite{Lalonde-McDuff} provided a proof for all symplectic manifolds.  

In the Poisson setting the non-degeneracy of the Hofer norm was proven by I.~M\u{a}rcu\cb{t} and the author in \cite{Joksimovic-Marcut}, reducing the setting to the symplectic case by restricting to a symplectic leaf. Before that, the non-degeneracy was known for  Poisson manifolds whose symplectic leaves are closed embedded submanifolds due to
D.~Sun and Z.~Zhang \cite{Sun}\footnote{Actually, in 
\cite{Sun} the non-degeneracy of the Hofer norm was claimed for regular Poisson manifolds, but in the proof they do not use regularity, but the assumption that the restriction of a compactly supported function to a leaf is compactly supported, however, without stating this explicitly.}, and for Poisson manifolds whose closed leaves form a dense set due to T.~Rybicki \cite{Rybicki}.

We refer the reader to the book by L.~Polterovich \cite{Polterovich-hofer} for a detailed overview on Hofer geometry and to \cite{Burago-Ivanov-Polterovich} for a more general discussion on the importance of conjugation-invariant norms on various symmetry groups. \\

The main ingredient of the proof of Theorem \ref{main-thm} is the following proposition which is the Poisson analog of \cite[Prop.~3.6]{Oh-Muller}.

\begin{proposition} \label{prop:hofer-c0-limit}
    Let $\varphi_k \in \Ham,$ $k \in \N$ be a sequence such that $\varphi_k \hcon \varphi \in \Ham$ and $\varphi_k \ccon \psi \in \Homeo (M).$ Then $\varphi = \psi.$
\end{proposition}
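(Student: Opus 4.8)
The plan is to argue by contradiction, exploiting the non-degeneracy of the Hofer norm on $\Ham$ (proven in \cite{Joksimovic-Marcut}). Suppose $\varphi \neq \psi$; since $\psi$ is the $C^0$-limit of the $\varphi_k$, there is a point $x_0 \in M$ at which $\varphi(x_0) \neq \psi(x_0)$. The key observation is that the composition $\varphi^{-1} \circ \varphi_k$ lies in $\Ham$ (as $\Ham$ is a group and $\varphi \in \Ham$), and this sequence $C^0$-converges to $\varphi^{-1}\circ \psi =: \eta \in \Homeo(M)$, while at the same time its Hofer norm tends to zero: by conjugation invariance and the triangle inequality for the Hofer pseudo-norm we get $\vv \varphi^{-1}\circ\varphi_k \vvh \leq \vv \varphi_k \varphi^{-1} - \mathrm{comparison} \vvh$, more cleanly $\vv \varphi^{-1}\circ \varphi_k\vvh = \vv \varphi_k \circ \varphi^{-1}\vvh \arr 0$ because $\varphi_k \hcon \varphi$. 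Thus we reduce to the statement: if $\phi_k \in \Ham$ with $\vv \phi_k \vvh \arr 0$ and $\phi_k \ccon \eta$, then $\eta = \operatorname{id}_M$.

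To prove this reduced statement I would use the energy-capacity inequality directly, following the strategy of \cite[Prop.~3.6]{Oh-Muller}. If $\eta \neq \operatorname{id}_M$, choose $x_0$ with $\eta(x_0) \neq x_0$ and pick a small displaceable ball $B$ (of positive capacity) around $x_0$ whose image $\eta(B)$ is disjoint from $B$; concretely, take $B$ small enough that $\ol{B} \cap \ol{\eta(B)} = \emptyset$, which is possible by continuity. For $k$ large the $C^0$-closeness forces $\phi_k(B') \cap B' = \emptyset$ for a slightly smaller ball $B'$, i.e.\ $\phi_k$ displaces $B'$. The energy-capacity inequality then yields a uniform lower bound $\vv \phi_k \vvh \geq c(B') > 0$, where $c(B')$ is the capacity of $B'$, contradicting $\vv \phi_k \vvh \arr 0$. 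The one subtlety here is that everything must take place \emph{within a single symplectic leaf}: since Hamiltonian flows preserve the symplectic foliation, the displacement and the energy-capacity estimate should be run on the symplectic leaf $L$ through $x_0$, using the restriction of the Hofer norm, exactly as the non-degeneracy in \cite{Joksimovic-Marcut} is obtained by restricting to a leaf.

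I expect the main obstacle to be the passage from the \emph{smooth} displacement energy bound to the $C^0$-limit, together with handling the leaf-wise structure. In the symplectic case the energy-capacity inequality reads $\vv \phi \vvh \geq \tfrac12 e(B)$ for a ball $\phi$ displaces, and one must ensure that the relevant ball $B'$ sits inside a single leaf $L$ and is displaced \emph{inside} $L$ by $\phi_k|_L$, so that one may invoke the symplectic energy-capacity inequality on $(L,\omega_L)$. Since $\phi_k$ and its limit $\eta$ preserve $L$ (Hamiltonian isotopies are tangent to leaves), and since $\eta(x_0)\neq x_0$ lies in the same leaf as $x_0$, a ball inside $L$ around $x_0$ is genuinely displaced, and the leafwise Hofer norm is bounded above by the ambient one. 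Care is needed because leaves may be immersed rather than embedded and may be non-compact; choosing $B'$ small and working in a foliation chart around $x_0$ should circumvent this. Once the displacement-energy estimate is localized to the leaf, the contradiction with $\vv \phi_k\vvh \arr 0$ closes the argument and gives $\eta = \operatorname{id}_M$, hence $\varphi = \psi$.
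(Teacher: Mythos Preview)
Your proposal is correct and follows essentially the same route as the paper: reduce to $\phi_k := \varphi^{-1}\varphi_k$, pick a small ball $B$ inside the symplectic leaf $L$ through a point where $\eta=\varphi^{-1}\psi$ is not the identity, use $C^0$-convergence to get $\phi_k(B)\cap B=\emptyset$ for large $k$, and contradict $\vv\phi_k\vvh\to 0$ via the leafwise energy--capacity inequality from \cite{Joksimovic-Marcut}. One small correction: you assert that ``$\phi_k$ and its limit $\eta$ preserve $L$'' and that ``$\eta(x_0)$ lies in the same leaf as $x_0$'', but $\eta=\varphi^{-1}\psi$ need not preserve leaves, since $\psi$ is merely a homeomorphism. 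Fortunately this is irrelevant to the argument: you only need $\eta(B)\cap B=\emptyset$ as subsets of $M$ to conclude $\phi_k(B)\cap B=\emptyset$ for large $k$, and it is the \emph{Hamiltonian} maps $\phi_k$ (not $\eta$) that must send $B$ back into $L$ for the energy--capacity inequality to apply --- which they do.
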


The proof uses the Poisson version of the \emph{energy-capacity inequality}: 
let $\varphi \in \Ham$ and $B \subseteq L$ be an open ball such that $\varphi(B) \cap B = \emptyset.$ Then we have
\begin{equation} \label{eq:energy-capacity}
    \vv \varphi \vvh \geq e(B,L),
\end{equation}
where $e(B,L)$ is the displacement energy of $B$ inside the symplectic manifold $L.$  The proof follows from the standard energy-capacity inequality after restricting to a symplectic leaf, see \cite{Joksimovic-Marcut} for the details.

\begin{proof}[Proof of Proposition \ref{prop:hofer-c0-limit}]
Assume on the contrary that $\varphi \neq \psi.$ Then there exists $x \in M$ such that $\varphi^{-1}\psi (x) \neq x.$ Denote by $L$ the symplectic leaf through $x.$ Then there exists an open ball $B \subseteq L$ which contains $x$ and which is displaced by $\varphi^{-1}\psi.$ Hence there exists $k_0 \in \N$ such that for every $k \geq k_0$ it holds that $$\varphi^{-1}\varphi_k (B) \cap B = \emptyset.$$
Note that Hamiltonian isotopies preserve symplectic leaves and hence $\varphi^{-1}\varphi_k (B) \subseteq L.$
Now, from the energy-capacity inequality \eqref{eq:energy-capacity} we have that
$$\vv \varphi^{-1} \varphi_k \vvh \geq e(B,L).$$
Since the displacement energy of an open set of a symplectic manifold is always positive,  
we get a contradiction with the fact that $\vv \varphi^{-1} \varphi_k \vvh \arr 0.$
Hence $\varphi = \psi.$ This completes the proof of Proposition \ref{prop:hofer-c0-limit}.
\end{proof} 

\begin{corollary} \label{cor:flows-conjugation}
Let $\psi_k \in \Poiss,$ $k \in \N$ be a sequence which converges in $C^0$ topology to a smooth map $\psi \in \Diff (M).$ Then for every $f \in C_c^{\infty}(M)$ it holds that $\varphi_{f \circ \psi}^t = \psi^{-1} \varphi_f^t \psi,$ $\forall t \in [0,1].$
\end{corollary}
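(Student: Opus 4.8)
The plan is to pass to the $C^0$ limit in the naturality relation that holds for each genuine Poisson diffeomorphism $\psi_k$, and then to identify the resulting limit with the flow $\varphi_{f\circ\psi}^t$ by means of Proposition \ref{prop:hofer-c0-limit}. First I would record the naturality of Hamiltonian flows under Poisson diffeomorphisms. Fix $f\in C_c^{\infty}(M)$; since each $\psi_k$ is a diffeomorphism, $f\circ\psi_k\in C_c^{\infty}(M)$, so $\varphi_{f\circ\psi_k}^t\in\Ham$. A direct computation using the Poisson condition \eqref{eq:poisson} shows that $(\psi_k)_* X_{f\circ\psi_k}=X_f$: for every $g$,
\[
\big((\psi_k)_* X_{f\circ\psi_k}\big)(g)=\{f\circ\psi_k,\,g\circ\psi_k\}\circ\psi_k^{-1}=\big(\{f,g\}\circ\psi_k\big)\circ\psi_k^{-1}=X_f(g).
\]
Equivalently $X_{f\circ\psi_k}=\psi_k^{*}X_f$, whose flow is $\psi_k^{-1}\varphi_f^t\psi_k$, so that
\begin{equation}\label{eq:nat-flows}
\varphi_{f\circ\psi_k}^t=\psi_k^{-1}\varphi_f^t\psi_k,\qquad \forall t\in[0,1].
\end{equation}

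Next I would take the $C^0$ limit of the right-hand side of \eqref{eq:nat-flows}. Since $\psi_k\ccon\psi$ and $\psi$ is a diffeomorphism, the inverses converge as well, $\psi_k^{-1}\ccon\psi^{-1}$, and therefore $\psi_k^{-1}\varphi_f^t\psi_k\ccon\psi^{-1}\varphi_f^t\psi$ uniformly on compact sets. Combined with \eqref{eq:nat-flows} this gives $\varphi_{f\circ\psi_k}^t\ccon\psi^{-1}\varphi_f^t\psi=:\Psi^t\in\Homeo(M)$.

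The heart of the argument is to show that the \emph{same} sequence converges in the Hofer metric to $\varphi_{f\circ\psi}^t$. Since $f$ is smooth with compact support it is Lipschitz, so $\Vert f\circ\psi_k-f\circ\psi\Vert^{C^0}\to 0$. Joining the autonomous Hamiltonians $f\circ\psi$ and $f\circ\psi_k$ by the linear path $h_s:=f\circ\psi+s\,(f\circ\psi_k-f\circ\psi)$ and applying the standard formula for the variation of the endpoint of a Hamiltonian flow, the element $\varphi_{f\circ\psi_k}^t(\varphi_{f\circ\psi}^t)^{-1}$ is generated, in the variable $s$, by a Hamiltonian of the form $K^s=\int_0^t (f\circ\psi_k-f\circ\psi)\circ(\varphi_{h_s}^{\tau})^{-1}\,d\tau$. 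Because precomposition with a diffeomorphism does not change the range of a function, $\operatorname{osc}(K^s)\le\operatorname{osc}(f\circ\psi_k-f\circ\psi)\le 2\Vert f\circ\psi_k-f\circ\psi\Vert^{C^0}$ uniformly in $s$, and hence $\vv\varphi_{f\circ\psi_k}^t(\varphi_{f\circ\psi}^t)^{-1}\vvh\le\int_0^1\operatorname{osc}(K^s)\,ds\le 2\Vert f\circ\psi_k-f\circ\psi\Vert^{C^0}\to 0$, i.e.\ $\varphi_{f\circ\psi_k}^t\hcon\varphi_{f\circ\psi}^t$. This oscillation estimate is the step I expect to require the most care: it is exactly the mechanism that links the a priori unrelated $C^0$ and Hofer topologies, and one must check that it survives the Poisson length functional, which (unlike in the symplectic case) depends on the chosen generating Hamiltonian.

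Finally I would invoke Proposition \ref{prop:hofer-c0-limit} with the sequence $\varphi_{f\circ\psi_k}^t$: by the previous two paragraphs it converges in the Hofer metric to $\varphi_{f\circ\psi}^t\in\Ham$ and in the $C^0$ topology to $\Psi^t\in\Homeo(M)$, so the two limits must coincide. This yields $\varphi_{f\circ\psi}^t=\Psi^t=\psi^{-1}\varphi_f^t\psi$ for all $t\in[0,1]$, which is the claim.
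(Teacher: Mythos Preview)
Your proof is correct and follows essentially the same strategy as the paper: naturality of Hamiltonian flows under each $\psi_k$, $C^0$ convergence of $\psi_k^{-1}\varphi_f^t\psi_k$ to $\psi^{-1}\varphi_f^t\psi$, Hofer convergence of $\varphi_{f\circ\psi_k}^t$ to $\varphi_{f\circ\psi}^t$ via an oscillation estimate, and then an appeal to Proposition~\ref{prop:hofer-c0-limit}. The only difference is in the Hofer step: the paper uses the simpler composition formula $F\#\bar F_k=(F-F_k)\circ\varphi_F^{-t}$ generating $\varphi_F^t\circ(\varphi_{F_k}^t)^{-1}$, which gives the bound $\vv\varphi_{f\circ\psi}^1\circ\varphi_{f\circ\psi_k}^{-1}\vvh\le 2\Vert f\circ\psi-f\circ\psi_k\Vert_\infty$ in one line, whereas your homotopy-of-Hamiltonians variation argument reaches the same conclusion by a longer route.
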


\begin{proof}
    It is enough to prove the statement for $t=1,$ then the other cases follow after rescaling Hamiltonians.
    Note that $f \circ \psi_k \ccon f \circ \psi$ implies $\varphi_{f \circ \psi_k}^1 \hcon \varphi_{f \circ \psi}^1.$ 
    Namely, denoting $F:= f \circ \psi,$ $F_k :=f \circ \psi_k$ we have that the Hamiltonian
    $$F \#  \Bar{F}_k := F  - F_k \circ \varphi_F^{-t} = (F - F_k) \circ \varphi_F^{-t},$$
    generates the flow $(\varphi^t_{f\circ \psi} \circ \varphi_{f \circ \psi_k}^{-t})_{t \in [0,1]},$ and therefore
    $$\vv \varphi^1_{f\circ \psi} \circ \varphi_{f \circ \psi_k}^{-1}\vvh \leq l(F \# \Bar{F}_k) \leq 2\vv f \circ \psi -f \circ \psi_k \vv_{\infty} \overset{k \arr \infty}{\longrightarrow} 0 .$$
    On the other hand 
    \begin{equation*} 
        \varphi^1_{f \circ \psi_k} = \psi_k^{-1} \varphi^1_f \psi_k \ccon \psi^{-1} \varphi^1_f \psi.
    \end{equation*}
    Now Corollary \ref{cor:flows-conjugation} follows from Proposition \ref{prop:hofer-c0-limit} applied to the sequence $\{\varphi^1_{f \circ \psi_k}\}_{k \in \N}.$
    %
    %This completes the proof of Corollary \ref{cor:flows-conjugation}. 
\end{proof}

We are now ready for the proof of the main result.

\begin{proof}[Proof of Theorem \ref{main-thm}]
Let $\psi_k \in \Poiss, k \in \N$ be a sequence which converges in the $C^0$ topology to a smooth map $\psi \in \Diff (M).$ We will show that $\psi$ preserves the Poisson bracket of all compactly supported functions $f,g \in C_c^{\infty} (M),$ see Remark \ref{rmk:comp_supp}.

Let $f,g \in C_c^{\infty}(M).$ We define $F,F_k,G,G_k \in C_c^{\infty}(M)$ by
\begin{align*}
    F_k := f \circ \psi_k,& \quad F := f\circ \psi,\\
    G_k := g \circ \psi_k,& \quad G := g \circ \psi.
\end{align*}
Notice that $F_k \ccon F,$ $G_k \ccon G.$ 
From Corollary \ref{cor:flows-conjugation} we get that 
\begin{equation}\label{eq:gk_flow}
    \varphi^t_{G_k} = \psi_k^{-1} \varphi_g^t \psi_k  \ccon  \psi^{-1} \varphi_g^t \psi = \varphi_G^t, \quad \forall t \in [0,1].
\end{equation}
Therefore
\begin{align*} 
    F - F \circ \varphi_{G}^t &= \lim_{k \arr \infty} F_k - F_k \circ \varphi_{G_k}^t \\ \nonumber
                        &= \lim_{k \arr \infty} \int_0^t \{ F_k, G_k\} \circ \varphi_{G_k}^s ds \\
                        &= \lim_{k \arr \infty} \int_0^t \{ f \circ \psi_k, g \circ \psi_k\} \circ \varphi_{G_k}^s ds. 
\end{align*}
Using that $\psi_k \in \Poiss$ we get
\begin{equation*}
     F - F \circ \varphi_{G}^t = \lim_{k \arr \infty} \int_0^t \{ f,g\} \circ \psi_k \circ \varphi_{G_k}^s ds,
\end{equation*}
and hence from \eqref{eq:gk_flow} and the fact that $\psi_k \ccon \psi$ it follows that
\begin{equation}\label{eq:lhs}
    F - F \circ \varphi_{G}^t = \int_0^t \{ f,g\} \circ \psi \circ \varphi_{G}^s ds.
\end{equation}
On the other hand 
\begin{equation} \label{eq:rhs}
    F - F \circ \varphi_{G}^t = \int_0^t \{F, G\} \circ \varphi_{G}^s ds =  \int_0^t \{f \circ \psi, g \circ \psi\} \circ \varphi_{G}^s ds.
\end{equation}
Differentiating both expressions \eqref{eq:lhs} and \eqref{eq:rhs} with respect to $t,$ and setting $t=0,$ we get that 
$$\{f \circ \psi,  g\circ \psi\} = \{f,g\} \circ \psi,$$ for all $f,g \in C_c^{\infty}(M).$
This completes the proof of Theorem \ref{main-thm}.

\end{proof}

\subsection*{Conflict of interest} On behalf of all authors, the corresponding author states that there is no conflict of
interest.

\bibliographystyle{alpha}

%\bibliography{References}

{\small
\bibliography{References}}
\vspace{1cm}

{\small

\medskip
 \noindent Du\v{s}an Joksimovi\'c\\
\noindent Universit\'e Paris-Saclay, F-91405 Orsay Cedex, France\\
 {\it e-mail:} dusan.joksimovic@universite-paris-saclay.fr

}

\end{document}